\newlength{\abstractwidth}
\flushbottom \thispagestyle{empty} \pagestyle{plain}
\renewcommand{\thanks}[1]{\footnote{#1}} 
\newcommand{\be}{\begin{equation}}
\newcommand{\ee}{\end{equation}}
\def\Re{{\rm Re\,}}
\def\log{\,{\rm log}\,}
\def\ddb{{\partial\bar\partial}}
\def\Tr{{\rm Tr\,}}
\def\det{{\rm det\,}}
\def\R{{\bf R}}
\newtheorem{lemma}{Lemma}
\newtheorem{theorem}{Theorem}
\newtheorem{corollary}{Corollary}
\begin{document}

\baselineskip=15pt

\begin{center}
{\Large \bf Parabolic complex Monge-Amp\`ere equations on compact Hermitian manifolds}
\end{center}

\begin{center}
Kevin Smith
\end{center}

\renewcommand{\abstractname}{\vspace{-\baselineskip}}
\begin{abstract}
\noindent \textbf{Abstract.} We prove the long-time existence and convergence of solutions to a general class of parabolic equations, not necessarily concave in the Hessian of the unknown function, on a compact Hermitian manifold. The limiting function is identified as the solution of an elliptic complex Monge-Amp\`ere equation.
\end{abstract}

\section{Introduction}\label{SectionIntroduction}

Since the celebrated solution of the Calabi conjecture by solving the complex Monge-Amp\`ere equation on a compact K\"ahler manifold \cite{Y}, fully non-linear partial differential equations have played an important role in complex geometry.

Motivated by unified string theories and particularly mirror symmetry, there has recently been interest in extending the theory of such equations beyond the K\"ahler setting to more general classes of Hermitian manifolds \cite{S, Pi, Ph, FHP, FGV, FY1, FY2, PPZ2, PPZ3, ST, BV, FFR}.

Additionally, new parabolic equations have emerged which are not concave in the eigenvalues of the Hessian of the unknown function \cite{PPZ3, PPZ4, CJY, Co}. This concavity property was an important requirement for fully non-linear equations as originally formulated in \cite{CNS1, CNS2, CNS3}.

The main theorem is:

\begin{theorem}\label{Theorem1}
Let $(X,\chi)$ be an $n$-dimensional compact Hermitian manifold, $f : X\rightarrow\R$ a smooth function, $F : (0,\infty)\rightarrow\R$ a strictly increasing smooth function, and $u_0 : X\rightarrow\R$ a smooth function satisfying $\chi + i\ddb u_0 > 0$. Then the equation
\be\label{ParabolicComplexMongeAmpereEquation}
\partial_t u = F(e^{-f}\det h)
\ee
admits a smooth solution which exists for all time and satisfies $\chi + i\ddb u > 0$ as well as $u(0) = u_0$, where $h^j{}_\ell = \delta^j{}_\ell + \chi^{j\bar k}u_{\bar k\ell}$. Furthermore the normalized solution
\be
\varphi = u - \frac{1}{V}\int_Xu\,\chi^n
\ee
converges in $C^\infty$ to the unique smooth function $\varphi_\infty$ solving the complex Monge-Amp\`ere equation
\be
(\chi + i\ddb\varphi_\infty)^n = ce^f\chi^n, \ \ \ \ \ c = \frac{\int_X(\chi + i\ddb\varphi_\infty)^n}{\int_Xe^f\chi^n}
\ee
satisfying $\chi + i\ddb\varphi_\infty > 0$ and $\int_X\varphi_\infty\chi^n = 0$.
\end{theorem}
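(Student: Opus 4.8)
The plan is to prove long-time existence by combining short-time existence with uniform a priori estimates in an open--closed argument, and then to prove $C^\infty$ convergence by exploiting the \emph{linear} parabolic equation satisfied by $\dot u:=\partial_t u$ together with uniqueness for the limiting elliptic equation. The starting observation is that $\det h=\omega_u^n/\chi^n$, where $\omega_u:=\chi+i\ddb u$, so \eqref{ParabolicComplexMongeAmpereEquation} reads $\partial_t u=F(e^{-f}\omega_u^n/\chi^n)$; its linearization $v\mapsto F'(e^{-f}\det h)\,e^{-f}(\det h)\,g^{j\bar k}v_{\bar k j}$ (with $g^{j\bar k}$ the inverse of $(\omega_u)_{j\bar k}$) is elliptic wherever $\omega_u>0$, so standard parabolic theory gives a smooth solution with $\omega_u>0$ and $u(0)=u_0$ on a maximal interval $[0,T)$. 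Differentiating \eqref{ParabolicComplexMongeAmpereEquation} in $t$ shows that $v=\dot u$ solves a \emph{linear} parabolic equation $\partial_t v=a^{j\bar k}v_{\bar k j}$ with $a^{j\bar k}=F'(e^{-f}\det h)\,e^{-f}(\det h)\,g^{j\bar k}$; by the maximum principle $\max_X v(\cdot,t)$ is non-increasing and $\min_X v(\cdot,t)$ is non-decreasing, so $\dot u$ is bounded on $X\times[0,T)$ by its values at $t=0$ and stays in a fixed compact subinterval $I$ of the range of $F$. Hence $e^{-f}\det h=F^{-1}(\dot u)$ lies between two positive constants, and $F$, $F^{-1}$, $F'$ are only ever evaluated on a compact subset $K$ of $(0,\infty)$ on which (using strict monotonicity of $F$) $F'$ is bounded below by a positive constant.

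With $\omega_u^n/\chi^n$ pinched between two positive constants uniformly in $t$, the zeroth-order estimate for the complex Monge--Amp\`ere equation on compact Hermitian manifolds gives $\mathrm{osc}_X u(\cdot,t)\le C$, hence $\|\varphi(\cdot,t)\|_{C^0}\le C$ uniformly in $t$. Next a parabolic second-order estimate --- running the maximum principle on $\log\mathrm{tr}_\chi\omega_u-A\varphi$, absorbing the torsion terms of $\chi$ as in the Hermitian Monge--Amp\`ere literature, together with a gradient estimate (blow-up argument, or interpolation between the $C^0$ bound and the Hessian bound) --- yields $\mathrm{tr}_\chi\omega_u\le C$; combined with $\omega_u^n\ge c\,\chi^n>0$ this forces $\omega_u$ to be uniformly equivalent to $\chi$, so the flow is uniformly parabolic and $\omega_u$ stays positive. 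The Krylov--Safonov estimate applied to the linear equation for $v=\dot u$ (valid for uniformly parabolic equations with merely bounded measurable coefficients, and requiring no concavity) now gives a uniform $C^\alpha$ bound on $\dot u$. Since at each fixed time $u$ solves $(\chi+i\ddb u)^n=e^f F^{-1}(\dot u)\,\chi^n$ with $C^\alpha$ right-hand side bounded away from $0$, the elliptic Evans--Krylov theorem --- applicable because $\log\det$ is concave even though $F$ need not be --- gives a uniform $C^{2,\alpha}_x$ bound on $u(\cdot,t)$, and Schauder bootstrapping then yields uniform bounds $\|\varphi(\cdot,t)\|_{C^k}\le C_k$ for every $k$ and all $t\ge1$. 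These estimates exclude finite-time singularities, so $T=\infty$.

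For convergence, note that $M(t):=\max_X\dot u(\cdot,t)$ and $m(t):=\min_X\dot u(\cdot,t)$ are monotone and bounded, so they converge to limits $M_\infty\ge m_\infty$. Applying the parabolic Harnack inequality (Krylov--Safonov again) to the nonnegative solution $w=M(t_0)-\dot u$ on $X\times[t_0,t_0+2]$ gives $M(t_0)-m(t_0)\le C\,(M(t_0)-M(t_0+1))$; letting $t_0\to\infty$ forces $M_\infty=m_\infty=:a\in I$. Thus $\dot u(\cdot,t)\to a$ in $C^0$, so $\omega_\varphi^n/\chi^n=e^f F^{-1}(\dot u)\to e^f F^{-1}(a)$ uniformly, with $F^{-1}(a)\in(0,\infty)$. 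By the uniform higher-order bounds $\{\varphi(\cdot,t)\}_{t\ge1}$ is precompact in $C^\infty$, and any subsequential limit $\varphi_\infty$ satisfies $\chi+i\ddb\varphi_\infty>0$, $\int_X\varphi_\infty\chi^n=0$, and $(\chi+i\ddb\varphi_\infty)^n=e^f F^{-1}(a)\chi^n$. By the uniqueness in the resolution of the Hermitian complex Monge--Amp\`ere equation, both the constant $F^{-1}(a)$ --- which then equals $c=\int_X(\chi+i\ddb\varphi_\infty)^n/\int_X e^f\chi^n$ --- and the normalized function $\varphi_\infty$ are uniquely determined, so every subsequential limit coincides and $\varphi(\cdot,t)\to\varphi_\infty$ in $C^\infty$.

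The main obstacle is the second-order estimate on a non-K\"ahler manifold combined with the absence of concavity of $u\mapsto F(e^{-f}\det h)$: the torsion of $\chi$ produces first-order error terms in the evolution of $\log\mathrm{tr}_\chi\omega_u$ that must be absorbed, and one cannot invoke Evans--Krylov for the equation itself. The organizing idea of the whole argument is to transfer regularity through the linear equation for $\dot u$ --- Krylov--Safonov needs neither concavity nor regular coefficients --- and only after obtaining $\dot u\in C^\alpha$ to read off a genuinely \emph{concave} Monge--Amp\`ere equation for $u$ with H\"older data, on which Evans--Krylov and Schauder bootstrapping do apply. The remaining care points are keeping the second-order bound independent of higher derivatives and tracking the compact range $K$ so that $F'$ stays uniformly positive there.
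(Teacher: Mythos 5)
Your overall architecture is exactly the one this paper (together with \cite{PZ}, to which it defers for everything except the second-order estimate) uses: pinch $e^{-f}\det h$ by the maximum principle applied to the linear equation for $\partial_tu$, get $C^0$ from the elliptic Hermitian estimate of \cite{TW2}, prove a $C^2$ bound by a maximum-principle test function, then pass to $C^\alpha$ for $\partial_tu$ via Krylov--Safonov, read off the \emph{concave} elliptic equation $(\chi+i\ddb u)^n=e^fF^{-1}(\partial_tu)\chi^n$ at each fixed time to apply Evans--Krylov and Schauder, and conclude convergence with the Harnack inequality plus uniqueness for the normalized Hermitian Monge--Amp\`ere equation. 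That part of your write-up is sound and correctly identifies why non-concavity of $F$ is harmless at the Evans--Krylov stage.

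The genuine gap is in the one step that is the actual content of this paper: the second-order estimate. The test function $\log\Tr h-A\varphi$ you propose does not close, for two separate reasons. First, on a non-K\"ahler $\chi$ the evolution of $\log\Tr h$ produces the torsion term $-2\Re g^{j\bar k}(\partial_j\Tr h/\Tr h)\overline{T_k}$, which at the maximum point becomes $-2A\Re g^{j\bar k}\partial_j\varphi\,\overline{T_k}$; absorbing this requires a good gradient term of the form $-\epsilon\,g^{j\bar k}\partial_j\varphi\partial_{\bar k}\varphi$, which $-A\varphi$ does not supply. This is precisely why the paper adds $\frac{1}{\varphi-\inf\varphi+1}$ to the test function (following \cite{PS}, \cite{TW3}); ``absorbing the torsion as in the Hermitian literature'' already means modifying the test function, not just estimating. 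Second, and more specific to this problem, the evolution of $\log\Tr h$ under $\partial_tu=F(e^{-f}\det h)$ contains the term $\bigl(\tfrac{1}{(\det h)^2}+\tfrac{F''e^{-f}}{F'\det h}\bigr)|\partial\det h|^2$, which has no sign when $F$ is not concave and cannot be controlled by the Hessian-of-$g$ good term or by any zeroth-order quantity. Your sketch offers no mechanism for it; the paper kills it with the additional term $\tfrac{B}{2}F^2$ in the test function, whose evolution produces $-BF'e^{-f}\det h\,g^{j\bar k}\partial_jF\partial_{\bar k}F$, dominating $|\partial\det h|^2/\Tr h$ for $B$ large. Until the $C^2$ bound is actually established with a test function handling both issues, the Krylov--Safonov/Evans--Krylov/Harnack machinery downstream has nothing to run on.
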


\noindent Equation (\ref{ParabolicComplexMongeAmpereEquation}) has been studied for various concave choices of $F$ in the K\"ahler and Hermitian settings \cite{Ca, CHT, TW3}. Because we do not assume $F$ to be concave, the general theory of parabolic equations developed in \cite{CNS1, CNS2, CNS3} or \cite{PT} can not be applied.

Theorem \ref{Theorem1} is a generalization of the theorem of \cite{PZ}, which was proved in the K\"ahler setting. To prove Theorem \ref{Theorem1}, it suffices to prove the following $C^2$ estimate.

\begin{theorem}\label{Theorem2}
With the setup of Theorem \ref{Theorem1}, suppose that $u$ solves (\ref{ParabolicComplexMongeAmpereEquation}) on $X\times[0,T]$ for some $0 < T \leq \infty$ with $u(0) = u_0$. Then there exists a constant $C > 0$ depending only on $X$, $\chi$, $f$, $F$, $u_0$ so that
\be\label{C2Estimate}
\Tr h\leq Ce^{C(\varphi - \inf\varphi)}.
\ee
\end{theorem}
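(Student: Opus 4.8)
The plan is to prove (\ref{C2Estimate}) by a maximum principle argument for $\log\Tr h$ with a barrier, after first extracting from (\ref{ParabolicComplexMongeAmpereEquation}) an a priori bound on $\partial_t u$ which simultaneously restores concavity and neutralizes the non-concave terms. Write $\Phi=e^{-f}\det h$, $\tilde g_{\bar k j}=\chi_{\bar k j}+u_{\bar k j}$, let $\nabla$ denote the Chern connection of $\chi$, and let $\lambda_1\ge\cdots\ge\lambda_n>0$ be the eigenvalues of $h$. Differentiating (\ref{ParabolicComplexMongeAmpereEquation}) in $t$ shows that $\partial_t u$ satisfies the homogeneous linearized equation $\partial_t(\partial_t u)=\Lambda\,\tilde g^{j\bar k}\nabla_j\nabla_{\bar k}(\partial_t u)$ with $\Lambda=F'(\Phi)\Phi>0$; the parabolic maximum principle then confines $\partial_t u$ to the interval $[\min_X\partial_t u|_{t=0},\ \max_X\partial_t u|_{t=0}]$, whose endpoints depend only on $X,\chi,f,F,u_0$. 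Since $F$ is a strictly increasing smooth homeomorphism onto its image, $\Phi=F^{-1}(\partial_t u)$ — hence $\det h=\prod_i\lambda_i$ and $\Lambda$ — stays in a fixed compact subinterval of $(0,\infty)$. Moreover (\ref{ParabolicComplexMongeAmpereEquation}) is then equivalent to the concave-in-the-Hessian equation $\log\det h=f+G(\partial_t u)$, with $G=\log\circ F^{-1}$, and $G,G',G''$ are bounded on the range of $\partial_t u$.

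Put $\mathcal M=\tilde g^{j\bar k}\nabla_j\nabla_{\bar k}-G'(\partial_t u)\,\partial_t$, a positive multiple of minus the linearized parabolic operator; by the above $\mathcal M(\partial_t u)=0$. On $X\times[0,T']$, $T'<T$ finite, set
\[
Q=\log\Tr h+\tfrac12 C_G(\partial_t u)^2-A\varphi,
\]
where $C_G=\sup|G''|$ over the range of $\partial_t u$ and $A$ is a large constant to be fixed. If $\max Q$ occurs at $t=0$ it is bounded by the data; otherwise at an interior maximum $(x_0,t_0)$ one has $\mathcal M Q\le0$, since the spatial complex Hessian of $Q$ is $\le0$ and $\partial_t Q\ge0$ there. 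Differentiating $\log\det h=f+G(\partial_t u)$ twice and contracting against $\chi$ gives
\be
\mathcal M(\Tr h)=T+\chi^{p\bar q}\nabla_p\nabla_{\bar q}f+G''(\partial_t u)\,|\nabla\partial_t u|_\chi^2+(\text{curvature and torsion of }\chi),
\ee
where $T=\chi^{p\bar q}\tilde g^{i\bar l}\tilde g^{k\bar j}(\nabla_{\bar q}\tilde g_{\bar l k})(\nabla_p\tilde g_{\bar j i})\ge0$ is the good third-order term coming from the concavity of $\log\det$. Using $\mathcal M(\log\Tr h)=\mathcal M(\Tr h)/\Tr h-|\nabla\Tr h|_{\tilde g}^2/(\Tr h)^2$, $\mathcal M((\partial_t u)^2)=2|\nabla\partial_t u|_{\tilde g}^2$, and $\mathcal M\varphi=n-\tilde g^{j\bar k}\chi_{\bar k j}-G'(\partial_t u)\,\partial_t\varphi$ with $\partial_t\varphi$ bounded, one assembles a lower bound for $\mathcal M Q$ at $(x_0,t_0)$. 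There the zeroth-order curvature terms contribute $\ge-C\,\tilde g^{j\bar k}\chi_{\bar k j}-C$ after dividing by $\Tr h\ge1$; the first-order torsion terms of $\chi$, together with $T$ and $-|\nabla\Tr h|_{\tilde g}^2/(\Tr h)^2$, are bounded below by $-C\,\tilde g^{j\bar k}\chi_{\bar k j}-C$ by the Hermitian version of the Aubin–Yau inequality; and, crucially, the non-concave term obeys $G''(\partial_t u)\,|\nabla\partial_t u|_\chi^2/\Tr h\ge-C_G|\nabla\partial_t u|_{\tilde g}^2$ via the elementary bound $|\nabla\partial_t u|_\chi^2\le\Tr h\cdot|\nabla\partial_t u|_{\tilde g}^2$, which is exactly cancelled by the $C_G|\nabla\partial_t u|_{\tilde g}^2$ produced by the term $\tfrac12 C_G(\partial_t u)^2$. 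What remains is $0\ge\mathcal M Q\ge(A-C)\,\tilde g^{j\bar k}\chi_{\bar k j}-CA-C$ at $(x_0,t_0)$, so fixing $A$ large forces $\tilde g^{j\bar k}\chi_{\bar k j}(x_0,t_0)=\sum_i\lambda_i^{-1}\le C$.

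To conclude, at $(x_0,t_0)$ each $\lambda_i$ is bounded below, while $\det h=\prod_i\lambda_i$ is bounded above by the preliminary estimate, so $\lambda_1=\det h/\prod_{i\ge2}\lambda_i$, and hence $\Tr h$, is bounded above there by a data constant. Since $\tfrac12 C_G(\partial_t u)^2$ is bounded and $\varphi(x_0,t_0)\ge\inf\varphi$, this gives $\max_{X\times[0,T']}Q\le C_1-A\inf\varphi$ (the same bound being immediate when $t_0=0$); exponentiating, $\Tr h\le C e^{C(\varphi-\inf\varphi)}$ on $X\times[0,T']$ with $C$ independent of $T'$, and letting $T'\uparrow T$ yields (\ref{C2Estimate}).

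The crux is the term $G''(\partial_t u)\,|\nabla\partial_t u|^2$, which has the wrong sign precisely when $F$ (equivalently $G$) is not concave and which the general machinery of \cite{CNS1, CNS2, CNS3} or \cite{PT} cannot absorb; the point that rescues it is the a priori $L^\infty$ bound on $\partial_t u$, which both keeps $G''$ bounded and permits the \emph{bounded} convex correction $\tfrac12 C_G(\partial_t u)^2$, whose image under $\mathcal M$ — computed from $\mathcal M(\partial_t u)=0$ — supplies precisely the term needed to dominate it. The remaining difficulty, laborious but by now standard, is the organization of the Chern torsion of $\chi$ so that it is controlled by $C\,\tilde g^{j\bar k}\chi_{\bar k j}+C$ rather than by a multiple of $(\tilde g^{j\bar k}\chi_{\bar k j})^2$, for which one must keep the first-order torsion terms grouped with $T$ and the logarithmic gradient term.
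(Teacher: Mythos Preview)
There is a genuine gap. Your test function $Q=\log\Tr h+\tfrac12 C_G(\partial_t u)^2-A\varphi$ is missing a term that the Hermitian (non-balanced) torsion forces on you. In the non-K\"ahler setting the Aubin--Yau inequality (Corollary~\ref{Corollary1} here, applied with $\omega-\chi=i\ddb u$) does \emph{not} yield the clean bound you assert; after the good third-order term and $-|\nabla\Tr h|_{\tilde g}^2/(\Tr h)^2$ have been played off against each other it leaves the residue
\[
-2\,\Re\,\tilde g^{j\bar k}\,\frac{\partial_j\Tr h}{\Tr h}\,\overline{T_k},
\]
where $T_k=T^p_{pk}$ is the torsion $1$-form of $\chi$. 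At the maximum of $Q$ one has $\partial_j\Tr h/\Tr h=A\,\partial_j\varphi-C_G(\partial_t u)\,\partial_j(\partial_t u)$; the piece involving $\partial_j(\partial_t u)$ is absorbed into $|\nabla\partial_t u|_{\tilde g}^2$ by enlarging $C_G$, but the piece $-2A\,\Re\,\tilde g^{j\bar k}\partial_j\varphi\,\overline{T_k}$ forces a term of order $\tilde g^{j\bar k}\partial_j\varphi\,\partial_{\bar k}\varphi$ for which your $Q$ provides no sink. Your sentence ``bounded below by $-C\,\tilde g^{j\bar k}\chi_{\bar kj}-C$ by the Hermitian version of the Aubin--Yau inequality'' is exactly where the argument breaks: that bound is valid when $\chi$ is balanced ($T_k\equiv 0$), but not in general, and the paper remarks explicitly that in the balanced case the proof simplifies and the extra barrier term is unnecessary.

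The paper's remedy is to include $\dfrac{1}{\varphi-\inf\varphi+1}$ in the test function (following \cite{PS,TW3}); applying $(\partial_t-L)$ to it produces precisely a negative $\tilde g^{j\bar k}\partial_j\varphi\,\partial_{\bar k}\varphi$ term (see (\ref{EvolutionPhi-1})), at the cost of a factor $(\varphi-\inf\varphi+1)^3$ in one leftover term, which is then disposed of by the dichotomy (\ref{Case1})/(\ref{Case2}). Your handling of the non-concavity via $\tfrac12 C_G(\partial_t u)^2=\tfrac12 C_G F^2$ is essentially the paper's $\tfrac{B}{2}F^2$ term (indeed $\partial_t u=F$), so the only missing idea is this extra barrier; without it --- or an equivalent device such as $e^{A(\sup\varphi-\varphi)}$ from \cite{G}, which the paper also mentions --- the argument does not close on a general Hermitian background.
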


\noindent Here and throughout this paper, we are writing $\inf\varphi = \inf_{X\times[0,T]}\varphi$.

The reason that Theorem \ref{Theorem2} implies Theorem \ref{Theorem1} is that the $C^2$ estimate is essentially the only place in \cite{PZ} that the K\"ahler property is used, and so the rest of the proof carries over to the Hermitian setting. We remark that the elliptic $C^0$ estimate from \cite{TW2} may be used as an alternative to the $C^0$ estimate from \cite{PZ}.

In order to streamline the proof of Theorem \ref{Theorem2}, we make use of Corollary \ref{Corollary1} below. Corollary \ref{Corollary1} follows from the following generalization of an inequality of Aubin and Yau, which we prove in section \ref{SectionAubinYauInequality} using a special coordinate system of \cite{GL}.

\begin{theorem}\label{Theorem3}
If $\chi$ and $\omega$ are two Hermitian metrics on a compact complex manifold, then
\begin{align}
& \frac{g^{j\bar k}\partial_j\Tr h\partial_{\bar k}\Tr h}{\Tr h} - \chi^{p\bar q}\Tr(h^{-1}\nabla_ph h^{-1} \nabla_{\bar q}h) - \Re\chi^{p\bar q}\chi_{\bar sr}g^{j\bar k}\nabla_ph^r{}_j\overline{T^s_{qk}} - \Re g^{j\bar k}\nabla_{\bar k}h^p{}_rT^r_{pj} \nonumber \\
& \leq - 2\Re g^{j\bar k}\frac{\partial_j\Tr h}{\Tr h}\overline{T_k} - \frac{1}{\Tr h}g^{j\bar k}T_j\overline{T_k} + \chi^{p\bar q}g^{j\bar k}\chi_{\bar sr}T^r_{pj}\overline{T^s_{qk}} \nonumber \\
& \ \ \ + 2\Re g^{j\bar k}\frac{\partial_j\Tr h}{\Tr h}\overline{\alpha_k} + \frac{1}{\Tr h}2\Re g^{j\bar k}T_j\overline{\alpha_k} - \Re\chi^{p\bar q}g^{j\bar k}T^r_{pj}\overline{\alpha_{\bar rqk}} - \frac{1}{\Tr h}g^{j\bar k}\alpha_j\overline{\alpha_k}. \label{AubinYauInequality}
\end{align}
\end{theorem}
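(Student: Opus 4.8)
\emph{Proof strategy.} Both sides of (\ref{AubinYauInequality}) are globally defined functions built tensorially from $\chi$, $\omega$ and their first covariant derivatives, so it suffices to verify the inequality at an arbitrary point $x_0$, and one is free to work in any holomorphic chart centered at $x_0$. The plan is to use the adapted coordinates of \cite{GL}: one may choose holomorphic coordinates around $x_0$ in which $\chi_{i\bar j}(x_0) = \delta_{ij}$, the endomorphism $h$ is diagonal at $x_0$ with positive eigenvalues $\lambda_1,\dots,\lambda_n$, and the first derivatives of $\chi$ at $x_0$ are reduced to their torsion part, so that the Chern--Christoffel symbols of $\chi$ at $x_0$ are linear expressions in $T^k_{ij}(x_0)$ alone. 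Then at $x_0$ one has $\chi^{p\bar q} = \delta^{pq}$, $g^{j\bar k} = \lambda_j^{-1}\delta^{jk}$, $h^{-1} = \mathrm{diag}(\lambda_i^{-1})$, and, taking the trace of the formula defining $\nabla_j h$, the identity $\partial_j\Tr h = \sum_i \nabla_j h^i{}_i$ (the Christoffel contributions cancel in the trace).

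Next I would substitute these normalizations into all four terms on the left of (\ref{AubinYauInequality}) and the seven on the right. Writing $A^r_{pj} := \nabla_p h^r{}_j(x_0)$, every term becomes a Hermitian quadratic form in the numbers $A^r_{pj}$, $T^r_{pj} := T^r_{pj}(x_0)$ and the components of $\alpha$ at $x_0$, with coefficients rational in the $\lambda_i$: for instance the first term on the left is $\tfrac{1}{\sigma}\sum_p \lambda_p^{-1}\big|\sum_i A^i_{pi}\big|^2$ with $\sigma := \Tr h = \sum_i\lambda_i$, the second is $\sum_{p,a,b}\lambda_a^{-1}\lambda_b^{-1}|A^a_{pb}|^2$, the $T$--$T$ term on the right is $\sum_{p,r,j}\lambda_j^{-1}|T^r_{pj}|^2$, and so on; the anti-holomorphic covariant derivatives appearing in the third and fourth left-hand terms are rewritten, via the Hermitian symmetry of $h$, $\chi$, $\omega$ together with the \cite{GL} normalization, as conjugates of the $A^r_{pj}$ up to further $T$-linear corrections. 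I would also record the identity relating the antisymmetrization $\nabla_p h^r{}_j - \nabla_j h^r{}_p$ to $\alpha^r_{pj}$ and the torsion of $\chi$ (contracted with $h$), which lets one trade $\alpha$-components for $A$- and $T$-components, or conversely, in order to bring the two sides into a common shape.

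The substance of the argument is then the purely algebraic claim that, after these substitutions, the left side minus the right side is $\le 0$. This is proved by completing the square: the only manifestly negative contribution is $-\sum_{p,a,b}\lambda_a^{-1}\lambda_b^{-1}|A^a_{pb}|^2$, and, writing $Q_p$ for the sum of the $p$-indexed pieces of the other ten terms, the idea is to rewrite, for each fixed $p$,
\be
-\sum_{a,b}\lambda_a^{-1}\lambda_b^{-1}|A^a_{pb}|^2 + Q_p = -\sum_{a,b}\lambda_a^{-1}\lambda_b^{-1}\bigl|A^a_{pb} - c^a_{pb}\bigr|^2 + R_p,
\ee
where $c^a_{pb}$ is an explicit linear combination of $T^a_{pb}$, $\alpha^a_{pb}$, and, through the diagonal entries $a=b$, of $\sigma^{-1}\sum_i A^i_{pi}$, $T_p$ and $\alpha_p$, and where the remainder $R_p$ collects only pure $T$-, $\alpha$- and $T$--$\alpha$ terms. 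The mixed left-hand terms $-\Re\,\chi^{p\bar q}\chi_{\bar s r}g^{j\bar k}\nabla_p h^r{}_j\overline{T^s_{qk}}$ and $-\Re\, g^{j\bar k}\nabla_{\bar k}h^p{}_r T^r_{pj}$, together with $\chi^{p\bar q}g^{j\bar k}\chi_{\bar s r}T^r_{pj}\overline{T^s_{qk}}$ and the $T$--$\alpha$ term on the right, are precisely what is needed to make the cross terms of this completion cancel, and the special \cite{GL} value of $\partial_k\chi_{i\bar j}(x_0)$ is what makes these cancellations exact. One further elementary input is the Cauchy--Schwarz inequality $\big|\sum_i A^i_{pi}\big|^2 = \big|\sum_i \lambda_i\,(\lambda_i^{-1}A^i_{pi})\big|^2 \le \big(\sum_i\lambda_i\big)\big(\sum_i \lambda_i^{-1}|A^i_{pi}|^2\big)$, which folds the trace term $\tfrac{1}{\sigma}\big|\sum_i A^i_{pi}\big|^2$ into the diagonal part of the quadratic form; after this one checks that $\sum_p R_p$ equals exactly minus the $\alpha$--$\alpha$ term plus the surviving $T$--$T$ and $T$--$\alpha$ terms, so that left minus right $= -\sum_{p,a,b}\lambda_a^{-1}\lambda_b^{-1}\bigl|A^a_{pb} - c^a_{pb}\bigr|^2 \le 0$.

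I expect the main obstacle to be exactly this completion-of-squares bookkeeping: there are $A$--$A$, $A$--$T$, $A$--$\alpha$, $T$--$T$, $T$--$\alpha$ and $\alpha$--$\alpha$ cross terms, plus the $\sigma^{-1}$-weighted traces, and it is not transparent a priori which square each should be folded into; one must keep the index regimes $a=b$ and $a\ne b$ separate throughout, track which contractions use $g$ and which use $\chi$, and verify that the leftover after completing the square is genuinely pointwise non-positive rather than merely non-positive on average. It is precisely to control the torsion contributions to the first derivatives of $\chi$ — which in an arbitrary chart would spoil the exact cancellation of the mixed terms — that the coordinate system of \cite{GL} is used. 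Once the algebraic inequality holds at $x_0$, its coordinate-free character yields (\ref{AubinYauInequality}) at every point.
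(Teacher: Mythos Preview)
Your overall plan matches the paper's: work at a point in the Guan--Li coordinates, reduce (\ref{AubinYauInequality}) to a purely algebraic inequality in the eigenvalues and first derivatives, and close it with Cauchy--Schwarz. But the specific Cauchy--Schwarz step you write down will not close the argument, and this is not merely bookkeeping.

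With your notation $A^a_{pb}=\nabla_ph^a{}_b$, the good term is $\sum_{p,a,b}\lambda_a^{-1}\lambda_b^{-1}|A^a_{pb}|^2$ (the $\chi$-index $p$ carries no $\lambda$-weight), while your Cauchy--Schwarz on the trace term gives
\[
\frac{1}{\sigma}\sum_p\lambda_p^{-1}\Bigl|\sum_iA^i_{pi}\Bigr|^2\ \le\ \sum_{p,i}\lambda_p^{-1}\lambda_i^{-1}\,|A^i_{pi}|^2.
\]
Here the weights are $\lambda_p^{-1}\lambda_i^{-1}$, whereas the only matching piece of the good term is the diagonal $a=b=i$, which carries weight $\lambda_i^{-2}$. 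Since $\lambda_p^{-1}\lambda_i^{-1}\le\lambda_i^{-2}$ fails in general, your proposed completion of squares cannot absorb the trace term into the good term.

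The missing step, which you allude to via the antisymmetrization identity but do not place correctly, is to swap indices \emph{before} applying Cauchy--Schwarz. In the GL chart one has $\partial_j\Tr h=\sum_q\partial_ju_{\bar qq}=\sum_q(\partial_qu_{\bar qj}+\alpha_{\bar qqj})$; only after this swap does the leading bad piece become $\tfrac{1}{\sigma}\sum_j\lambda_j^{-1}\bigl|\sum_q\partial_qg_{\bar qj}\bigr|^2$, and the double Cauchy--Schwarz then yields $\sum_{j,q}\lambda_j^{-1}\lambda_q^{-1}|\partial_qg_{\bar qj}|^2$, which is exactly the $p=r$ slice of the good term and hence absorbable. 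The remaining $p\ne r$ slice is used separately, via a Young inequality, to control the cross term $2\Re\,\chi^{p\bar q}g^{j\bar k}\partial_pg_{\bar sj}\overline{\Gamma^s_{kq}}$. Two smaller points: the GL normalization is only $\partial_j\chi_{\bar kk}(x_0)=0$, not a reduction of all of $\partial\chi$ to its torsion part; and the paper produces (\ref{AubinYauInequality}) through two separate estimates rather than a single exact $-\sum|A-c|^2$ identity, so the clean completion of the square you anticipate is probably too optimistic.
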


\noindent Here, we define the relative torsion $\alpha_{\bar qpj} = \partial_j(g_{\bar qp} - \chi_{\bar qp}) - \partial_p(g_{\bar qj} - \chi_{\bar qj})$ (that is, $i\alpha = \partial\omega - \partial\chi$) as well as $\alpha_j = \chi^{p\bar q}\alpha_{\bar qpj}$. For the rest of the notation in (\ref{AubinYauInequality}) we refer the reader to section \ref{SectionSetupAndNotation}.

We note that in the proof of Theorem \ref{Theorem3} and in the following corollary, we rewrite (\ref{AubinYauInequality}), using the identity
\begin{align}
\chi^{p\bar q}\Tr(h^{-1}\nabla_ph h^{-1} \nabla_{\bar q}h) + \Re\chi^{p\bar q}\chi_{\bar sr}g^{j\bar k}\nabla_ph^r{}_j\overline{T^s_{qk}} + \Re g^{j\bar k}\nabla_{\bar k}h^p{}_rT^r_{pj} \\
= \chi^{p\bar q}g^{j\bar k}g^{r\bar s}\nabla_pg_{\bar sj}\nabla_{\bar q}g_{\bar kr} + \Re\chi^{p\bar q}g^{j\bar k}\nabla_pg_{\bar sj}\overline{T^s_{qk}} + \Re\chi^{p\bar q}g^{j\bar k}\nabla_{\bar k}g_{\bar qr}T^r_{pj}. \nonumber
\end{align}
\begin{corollary}\label{Corollary1}
If $\partial\omega = \partial\chi$ (in particular if $\omega - \chi$ is $i\ddb$-exact) then
\begin{align}
& \frac{g^{j\bar k}\partial_j\Tr h\partial_{\bar k}\Tr h}{\Tr h} - \chi^{p\bar q}g^{j\bar k}g^{r\bar s}\nabla_pg_{\bar sj}\nabla_{\bar q}g_{\bar kr} - \Re\chi^{p\bar q}g^{j\bar k}\nabla_pg_{\bar sj}\overline{T^s_{qk}} - \Re\chi^{p\bar q}g^{j\bar k}\nabla_{\bar k}g_{\bar qr}T^r_{pj} \nonumber \\
& \leq - 2\Re g^{j\bar k}\frac{\partial_j\Tr h}{\Tr h}\overline{T_k} - \frac{1}{\Tr h}g^{j\bar k}T_j\overline{T_k} + \chi^{p\bar q}g^{j\bar k}\chi_{\bar sr}T^r_{pj}\overline{T^s_{qk}}. \label{CorollaryAubinYau}
\end{align}
\end{corollary}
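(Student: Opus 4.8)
The plan is to obtain Corollary \ref{Corollary1} as an immediate specialization of Theorem \ref{Theorem3}. Starting from inequality (\ref{AubinYauInequality}), I would first rewrite its left-hand side by means of the displayed identity that precedes the corollary, which is just the substitution $h^r{}_j = \chi^{r\bar s}g_{\bar sj}$ (that is, $h$ is the endomorphism $\chi^{-1}\omega$, whose trace $\Tr h$ is the quantity bounded in Theorem \ref{Theorem2}): this replaces the three terms $\chi^{p\bar q}\Tr(h^{-1}\nabla_p h\, h^{-1}\nabla_{\bar q}h) + \Re\chi^{p\bar q}\chi_{\bar sr}g^{j\bar k}\nabla_p h^r{}_j\overline{T^s_{qk}} + \Re g^{j\bar k}\nabla_{\bar k}h^p{}_r T^r_{pj}$ on the left of (\ref{AubinYauInequality}) by $\chi^{p\bar q}g^{j\bar k}g^{r\bar s}\nabla_p g_{\bar sj}\nabla_{\bar q}g_{\bar kr} + \Re\chi^{p\bar q}g^{j\bar k}\nabla_p g_{\bar sj}\overline{T^s_{qk}} + \Re\chi^{p\bar q}g^{j\bar k}\nabla_{\bar k}g_{\bar qr}T^r_{pj}$. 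After this substitution the left-hand side of (\ref{AubinYauInequality}) becomes verbatim the left-hand side of (\ref{CorollaryAubinYau}).

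It then remains only to kill the four $\alpha$-terms on the right-hand side. By definition the relative torsion satisfies $i\alpha = \partial\omega - \partial\chi$, with components $\alpha_{\bar qpj} = \partial_j(g_{\bar qp} - \chi_{\bar qp}) - \partial_p(g_{\bar qj} - \chi_{\bar qj})$ and $\alpha_j = \chi^{p\bar q}\alpha_{\bar qpj}$; hence the hypothesis $\partial\omega = \partial\chi$ forces $\alpha_{\bar qpj}\equiv 0$, and therefore $\alpha_j\equiv 0$, everywhere on $X$. The parenthetical case reduces to this one, since $\omega - \chi = i\ddb\psi$ gives $\partial(\omega - \chi) = i\partial\partial\bar\partial\psi = 0$. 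Substituting $\alpha\equiv 0$ into the rewritten form of (\ref{AubinYauInequality}) annihilates each of $2\Re g^{j\bar k}\frac{\partial_j\Tr h}{\Tr h}\overline{\alpha_k}$, $\frac{2}{\Tr h}\Re g^{j\bar k}T_j\overline{\alpha_k}$, $-\Re\chi^{p\bar q}g^{j\bar k}T^r_{pj}\overline{\alpha_{\bar rqk}}$, and $-\frac{1}{\Tr h}g^{j\bar k}\alpha_j\overline{\alpha_k}$, leaving exactly the three surviving $T$-terms $-2\Re g^{j\bar k}\frac{\partial_j\Tr h}{\Tr h}\overline{T_k} - \frac{1}{\Tr h}g^{j\bar k}T_j\overline{T_k} + \chi^{p\bar q}g^{j\bar k}\chi_{\bar sr}T^r_{pj}\overline{T^s_{qk}}$ that constitute the right-hand side of (\ref{CorollaryAubinYau}). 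This completes the argument.

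I do not anticipate any genuine obstacle at this stage: all of the analytic content lies in Theorem \ref{Theorem3} (proved in Section \ref{SectionAubinYauInequality} via the special coordinate system of \cite{GL}), and the corollary is a pure bookkeeping specialization. The only points I would double-check are that the index placement in the rewriting identity is applied correctly, so that the left-hand side transforms exactly as claimed, and that the sign convention $i\alpha = \partial\omega - \partial\chi$ is consistent with the component formula for $\alpha_{\bar qpj}$, so that the hypothesis $\partial\omega = \partial\chi$ genuinely annihilates every $\alpha$-term appearing in (\ref{AubinYauInequality}).
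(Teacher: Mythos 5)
Your proposal is correct and is exactly the paper's (implicit) argument: rewrite the left-hand side of (\ref{AubinYauInequality}) via the displayed identity (which is just $\nabla_p h^r{}_j = \chi^{r\bar s}\nabla_p g_{\bar sj}$ since $\nabla\chi = 0$), and observe that $\partial\omega = \partial\chi$ kills every $\alpha$-term on the right. Nothing further is needed.
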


\noindent To prove Theorem \ref{Theorem2} we use the test function
\be\label{TestFunction}
G = \log\Tr h - A\varphi + \frac{1}{\varphi - \inf\varphi + 1} + \frac{B}{2}F^2
\ee
and combine the techniques of \cite{PZ} and \cite{TW3}. We briefly discuss the motivation for including each term of (\ref{TestFunction}).

The first two terms of (\ref{TestFunction}) were used to obtain (\ref{C2Estimate}) for the complex Monge-Amp\`ere equation on a compact K\"ahler manifold \cite{A, Y}. From this Yau obtained a priori estimates up to order two, thus solving the Calabi conjecture and proving what is now called Yau's theorem.

The third term was introduced by \cite{PS} in a different setting, to obtain a $C^1$ estimate in the absence of a $C^0$ estimate. Subsequently this term was used in \cite{TW3} to obtain a $C^2$ estimate for the complex Monge-Amp\`ere equation on a compact Hermitian manifold.

Finally, the fourth term was introduced in \cite{PPZ4} to provide an alternative proof of Yau's theorem using the Anomaly flow \cite{PPZ1}, and the result was generalized in \cite{PZ}.

We remark that in case the metric $\chi$ is balanced, then the proof of Theorem \ref{Theorem2} becomes considerably simpler, and indeed does not require the third term of (\ref{TestFunction}). This is made especially clear by, for example, noting the presence of the torsion 1-form in first term on the right-hand side of (\ref{CorollaryAubinYau}).

Additionally we remark that one may obtain another $C^2$ estimate, which is of a different form than (\ref{C2Estimate}), by using the term $e^{A(\sup\varphi - \varphi)}$ from \cite{G} rather than the terms $-A\varphi + \frac{1}{\varphi - \inf\varphi + 1}$ in (\ref{TestFunction}).

This paper is structured as follows. In section \ref{SectionSetupAndNotation} we define all the objects that appear in the paper and review their basic properties. In section \ref{SectionPreliminaryEstimates} we prove some fundamental estimates that we will need many times. In section \ref{SectionEvolutionEquations} we compute the evolution of the test function $G$ along the flow. In section \ref{SectionAubinYauInequality} we prove Theorem \ref{Theorem3}. In section \ref{SectionSecondOrderEstimate} we prove Theorem \ref{Theorem2} and hence Theorem \ref{Theorem1}. \\

\noindent {\bf Acknowledgments:} I would like to thank my advisor D. H. Phong for constant, ongoing encouragement and support.

\section{Setup and notation}\label{SectionSetupAndNotation}

In this section we fix the notation that will be used throughout this paper.

Let $(X,\chi)$ be an $n$-dimensional compact Hermitian manifold. In a local holomorphic coordinate system we write $\chi = i\chi_{\bar kj}dz^j\wedge d\bar z^k$ and $\chi^{j\bar k}\chi_{\bar k\ell} = \delta^j{}_\ell$.

Corresponding to the Hermitian metric $\chi$ is the Chern connection, which is given on sections of $T^{1,0}X$ by
\be
\nabla_jV^p = \partial_jV^p + \Gamma^p_{jq}V^q, \ \ \ \ \ \nabla_{\bar k}V^j = \partial_{\bar k}V^p + \Gamma^p_{\bar kq}V^q
\ee
where
\be
\Gamma^p_{jq} = \chi^{p\bar r}\partial_j\chi_{\bar rq}, \ \ \ \ \ \Gamma^p_{\bar kq} = 0.
\ee
The Chern connection has torsion and torsion 1-form
\be
T^p_{jq} = \Gamma^p_{jq} - \Gamma^p_{qj}, \ \ \ \ \ T_j = T^p_{pj}
\ee
which are both generally non-zero. It also has curvature
\be
R_{\bar kj}{}^p{}_q = -\partial_{\bar k}\Gamma^p_{jq}
\ee
which satisfies the defining property that
\be\label{commutation_relation}
[\nabla_j,\nabla_{\bar k}]V^p = R_{\bar kj}{}^p{}_qV^q.
\ee
The metric defines a Laplacian
\be
\Delta = \chi^{j\bar k}\nabla_j\nabla_{\bar k}.
\ee
Now we turn our attention to equation (\ref{ParabolicComplexMongeAmpereEquation}). Since $F' > 0$, the flow is parabolic and exists for a short time. The remainder of this paper is dedicated to proving a priori estimates, so from now on we assume that $u$ solves (\ref{ParabolicComplexMongeAmpereEquation}) on an interval $[0,T]$, where $0 < T \leq \infty$, with $u(0) = u_0$, as well as the condition $\chi + i\ddb u_0 > 0$. We set
\be
\varphi = u - \frac{1}{V}\int_Xu\,\chi^n
\ee
where $V = \int_X\chi^n$.

Along the flow, we define a $(1,1)$-form
\be
g_{\bar kj} = \chi_{\bar kj} + u_{\bar kj}
\ee
as well as an endomorphism of $T^{1,0}X$
\be
h^p{}_q = \chi^{p\bar r}g_{\bar rq}.
\ee
The determinant estimate in the next section, together with the assumption that $\chi + i\ddb u_0 > 0$, implies that $g$ and $h$ remain positive along the flow. Therefor we may also consider the inverse matrix $g^{j\bar k}g_{\bar k\ell} = \delta^j{}_\ell$ and the inverse endomorphism $h^{-1}$. It follows that $\Tr h = n + \chi^{j\bar k}u_{\bar kj}$ and $\Tr h^{-1} = n - g^{j\bar k}u_{\bar kj}$.

We denote the linearized operator by
\be
L = F' e^{-f}\det h\,g^{j\bar k}\partial_j\partial_{\bar k}.
\ee
Here and throughout we write $F = F(e^{-f}\det h)$, $F' = F'(e^{-f}\det h)$, and so on.

\section{Preliminary estimates}\label{SectionPreliminaryEstimates}

In this section we prove fundamental estimates for the determinant and traces that will be needed many times.

We start by estimating the determinant. It will be convenient to consider the quantity $H = e^{-f}\det h$.

\begin{lemma}\label{LemmaDeterminantEstimate}
With the hypotheses of Theorem \ref{Theorem2},
\be\label{DeterminantEstimate}
C^{-1}\leq H\leq C, \ \ \ \ \ C^{-1}\leq \det h\leq C, \ \ \ \ \ |\partial_tu|\leq C, \ \ \ \ \ |\partial_t\varphi|\leq C.
\ee
In the first two estimates $C = C(\chi,u_0,f)$, and in the last two estimates $C = C(\chi,u_0,f,F)$.
\end{lemma}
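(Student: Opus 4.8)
The plan is to run the standard parabolic maximum-principle argument on the quantity $H = e^{-f}\det h$, exploiting the structure of equation (\ref{ParabolicComplexMongeAmpereEquation}). First I would compute the evolution equation for $\log H$, or equivalently for $\log\det h$. Differentiating in $t$ and using $\partial_t h^p{}_q = \chi^{p\bar r}\partial_t u_{\bar rq} = \chi^{p\bar r}\partial_j\partial_{\bar r}(\partial_t u)_q$ together with $\partial_t u = F(H)$, one gets $\partial_t\log\det h = h^q{}_p\,\partial_t h^p{}_q = g^{j\bar k}\partial_j\partial_{\bar k}F(H)$. Hence
\be
\partial_t\log H = \partial_t\log\det h = g^{j\bar k}\partial_j\partial_{\bar k}F(H) = F'\,g^{j\bar k}\partial_j\partial_{\bar k}H + F''\,g^{j\bar k}\partial_jH\,\partial_{\bar k}H.
\ee
Rewriting the left side as $\tfrac{1}{H}\partial_t H$ and multiplying through by $e^{-f}\det h$, this is an equation of the form $\partial_t H = L H + (\text{lower-order/gradient terms})$, where $L$ is the linearized operator defined at the end of section \ref{SectionSetupAndNotation}. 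I would then apply the parabolic maximum principle at a space-time point where $H$ (respectively $1/H$, or better $\log H$) attains its maximum over $X\times[0,T]$: at an interior maximum of $\log H$ the spatial gradient vanishes and the spatial Hessian is nonpositive, so the troublesome gradient term drops out and the second-derivative term has a favorable sign, forcing $\partial_t$ of the maximum to be controlled; combined with the initial value $H(0)=e^{-f}\det(\delta+\chi^{j\bar k}(u_0)_{\bar k\ell})$, which is bounded above and below by a constant depending only on $\chi, u_0, f$, this yields a uniform upper bound $H\le C$, and symmetrically a lower bound $H\ge C^{-1}$ by running the argument on $-\log H$ (i.e.\ at a minimum).

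Once $C^{-1}\le H\le C$ is established, the bound $C^{-1}\le\det h\le C$ is immediate since $\det h = e^f H$ and $f$ is a fixed smooth function on a compact manifold. For the time-derivative estimates: $\partial_t u = F(H)$, and since $F$ is continuous and $H$ ranges in the compact interval $[C^{-1},C]$, we get $|\partial_t u|\le\sup_{[C^{-1},C]}|F| =: C(\chi,u_0,f,F)$. Finally $\partial_t\varphi = \partial_t u - \tfrac{1}{V}\int_X\partial_t u\,\chi^n = F(H) - \tfrac{1}{V}\int_X F(H)\,\chi^n$, so $|\partial_t\varphi|\le 2\sup_{[C^{-1},C]}|F|$, which is again a constant depending only on $\chi,u_0,f,F$.

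The main obstacle is the correct handling of the $F''$ gradient term in the evolution equation: since we do \emph{not} assume $F$ concave, $F''$ can have either sign, so one cannot simply discard this term by a sign argument in general. The point is that at a spatial maximum (or minimum) of $\log H$ the gradient $\partial_j H$ vanishes, so the $F''$ term vanishes there regardless of the sign of $F''$ — this is exactly why one should test the maximum principle on $\log H$ (equivalently on $\log\det h$) rather than on $H$ itself. A secondary technical point is justifying that the maximum is attained and that the maximum principle applies on the possibly non-compact-in-time domain $X\times[0,T]$; on any finite subinterval $[0,T']$ this is standard since $X$ is compact and $u$ is smooth, and the resulting bound is independent of $T'$, so it passes to $T=\infty$. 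No Hermitian-specific difficulty arises here because no integration by parts against torsion is needed — this lemma is purely a pointwise maximum-principle argument.
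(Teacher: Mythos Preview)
Your proposal is correct and takes essentially the same approach as the paper: your computation of $\partial_t\log H$ unwinds, after multiplying by $H$, to exactly the evolution equation $(\partial_t-L)H=F''H\,g^{j\bar k}\partial_jH\partial_{\bar k}H$ that the paper states, and the maximum-principle reasoning (gradient vanishes at an extremum, so the $F''$ term drops out regardless of sign) is identical. One minor remark: your insistence that one \emph{must} test on $\log H$ rather than $H$ is unnecessary---the paper applies the maximum principle directly to $H$, since at a spatial extremum of $H$ the gradient $\partial_jH$ vanishes just as it would for $\log H$.
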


\begin{proof}
Since
\be
(\partial_t - L)H = F''Hg^{j\bar k}\partial_jH\partial_{\bar k}H
\ee
we have that $H_{max}$ is decreasing and $H_{min}$ is increasing. The first two estimates follow. The equation $\partial_tu = F$ now gives the last two estimates.
\end{proof}

\noindent Now we prove positive lower bounds for the traces.

\begin{lemma}\label{LemmaTracePositiveLowerBounds}
With the hypotheses of Theorem \ref{Theorem2},
\be
\Tr h \geq C^{-1} \textrm{ and } \Tr h^{-1} \geq C^{-1}.
\ee
Here $C = C(\chi,u_0,f)$.
\end{lemma}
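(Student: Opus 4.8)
The plan is to derive both lower bounds from the determinant estimate of Lemma \ref{LemmaDeterminantEstimate} together with elementary inequalities between the trace and the determinant of a positive Hermitian endomorphism. The key observation is that for a positive endomorphism $h$ of an $n$-dimensional space, the arithmetic-geometric mean inequality gives $\det h \leq (n^{-1}\Tr h)^n$, equivalently $\Tr h \geq n(\det h)^{1/n}$. Combining this with the lower bound $\det h \geq C^{-1}$ from \eqref{DeterminantEstimate} immediately yields $\Tr h \geq n C^{-1/n} \geq C'^{-1}$, where the new constant depends only on $\chi$, $u_0$, $f$ since the same is true of the constant in Lemma \ref{LemmaDeterminantEstimate}.

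For the second bound, I would apply the identical reasoning to $h^{-1}$. Since $h^{-1}$ is also positive along the flow, the same AM-GM inequality gives $\Tr h^{-1} \geq n(\det h^{-1})^{1/n} = n(\det h)^{-1/n}$, and now the \emph{upper} bound $\det h \leq C$ from \eqref{DeterminantEstimate} yields $\Tr h^{-1} \geq n C^{-1/n} \geq C'^{-1}$, again with a constant depending only on $\chi$, $u_0$, $f$. Alternatively, one could phrase this directly in terms of the eigenvalues $\lambda_1,\dots,\lambda_n > 0$ of $h$: from $\prod \lambda_i \leq C$ and, say, an individual upper bound on each $\lambda_i$ one gets a lower bound on each, but invoking the product bound on $h^{-1}$ is cleaner and avoids needing any upper trace bound (which is not yet available at this stage of the paper).

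There is essentially no obstacle here; the lemma is a soft consequence of the already-established determinant bounds, and the only point requiring a word of care is the bookkeeping of which constants are allowed to depend on which data — namely that both estimates use only the first two bounds in \eqref{DeterminantEstimate}, whose constant is $C(\chi,u_0,f)$, so the dependence claimed in the statement is correct and in particular does not involve $F$.
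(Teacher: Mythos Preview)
Your proof is correct and follows essentially the same approach as the paper: both derive the trace lower bounds directly from the determinant estimate \eqref{DeterminantEstimate} via elementary eigenvalue inequalities. The only cosmetic difference is that the paper uses $\Tr h \geq \lambda_{\max} \geq (\lambda_1\cdots\lambda_n)^{1/n}$ and $\Tr h^{-1} \geq 1/\lambda_{\min} \geq (\lambda_1\cdots\lambda_n)^{-1/n}$ instead of the full AM--GM inequality, but the content is identical.
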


\begin{proof}
The determinant estimate (\ref{DeterminantEstimate}) yields
\be
\Tr h \geq \lambda_{max} \geq \sqrt[n]{\lambda_1\cdots\lambda_n} \geq C^{-1}
\ee
and
\be
\Tr h^{-1} \geq \frac{1}{\lambda_{min}} \geq \frac{1}{\sqrt[n]{\lambda_1\cdots\lambda_n}} \geq C^{-1}.
\ee
\end{proof}

\section{Evolution equations}\label{SectionEvolutionEquations}

In this section we compute $(\partial_t - L)G$.

It is straightforward to check that the following formulas from \cite{PZ} still hold without the assumption that $\chi$ is K\"ahler:
\be\label{evolutionLogTrh1}
(\partial_t - L)\log\Tr h = \frac{(\partial_t - L)\Tr h}{\Tr h} + F'e^{-f}\det h\frac{g^{j\bar k}\partial_j\Tr h\partial_{\bar k}\Tr h}{(\Tr h)^2}
\ee
\begin{align}
(\partial_t - L)\Tr h & = F'e^{-f}\det h\bigg\{\frac{\Delta\det h}{\det h} - \frac{2}{\det h}\Re\langle\partial f,\partial\det h\rangle + e^f\Delta e^{-f} \label{EvolutionTrh} \\
                      & \ \ \ \ \ \ \ \ \ \ \ \ \ \ \ \ \ \ \ \ + \frac{F''}{F'e^{-f}\det h}|\partial(e^{-f}\det h)|^2 - g^{j\bar k}\partial_j\partial_{\bar k}\Delta u\bigg\}. \nonumber
\end{align}
\be\label{EvolutionDeth}
\frac{\Delta\det h}{\det h} = \chi^{p\bar q}g^{j\bar k}\nabla_p\nabla_{\bar q}g_{\bar kj} - \chi^{p\bar q}g^{j\bar k}g^{r\bar s}\nabla_pg_{\bar kr}\nabla_{\bar q}g_{\bar sj} + \frac{|\partial\det h|^2}{(\det h)^2}.
\ee
To proceed we must commute the covariant derivatives in the first term of (\ref{EvolutionDeth}). Because in the non-K\"ahler setting it is no longer true that $[\nabla_j,\nabla_k] = 0$, we will obtain terms involving both the curvature and the torsion of $\chi$. The relevant commutation relations, in the order we will use them, are
\be\label{CommutationRelation1}
[\nabla_{\bar q},\nabla_{\bar k}]u_j = -\partial_{\bar s}u_j\overline{T^s_{qk}}
\ee
\be\label{CommutationRelation2}
[\nabla_p,\nabla_{\bar k}]u_{\bar qj} = u_{\bar sj}R_{\bar kp\bar q}{}^{\bar s} - u_{\bar qr}R_{\bar kp}{}^r{}_j
\ee
\be\label{CommutationRelation3}
[\nabla_p,\nabla_j]u_{\bar q} = -\partial_ru_{\bar q}T^r_{pj}.
\ee
Applying (\ref{CommutationRelation1}), (\ref{CommutationRelation2}), (\ref{CommutationRelation3}) to the first term of (\ref{EvolutionDeth}) gives
\begin{align}
\chi^{p\bar q}g^{j\bar k}\nabla_p\nabla_{\bar q}g_{\bar kj} & = g^{j\bar k}\partial_j\partial_{\bar k}\Delta u + g^{j\bar k}u_{\bar sj}R_{\bar kp}{}^{p\bar s} - g^{j\bar k}u_{\bar qr}R_{\bar k}{}^{\bar qr}{}_j \\
     & \ \ \ \ \ \ \ \ \ \ - \chi^{p\bar q}g^{j\bar k}\nabla_p(u_{\bar sj}\overline{T^s_{qk}}) - \chi^{p\bar q}g^{j\bar k}\nabla_{\bar k}(u_{\bar qr}T^r_{pj}). \nonumber
\end{align}
Using $u_{\bar sr} = g_{\bar sr} - \chi_{\bar sr}$ on the curvature terms gives
\begin{align}
\chi^{p\bar q}g^{j\bar k}\nabla_p\nabla_{\bar q}g_{\bar kj} & = g^{j\bar k}\partial_j\partial_{\bar k}\Delta u + R' - h^{-1j}{}_kR^k{}_p{}^q{}_jh^p{}_q \label{FourthOrderTerm} \\
     & \ \ \ \ \ \ \ \ \ \ - \chi^{p\bar q}g^{j\bar k}\nabla_p(u_{\bar sj}\overline{T^s_{qk}}) - \chi^{p\bar q}g^{j\bar k}\nabla_{\bar k}(u_{\bar qr}T^r_{pj}) \nonumber
\end{align}
where $R' = R^j{}_p{}^p{}_j$.

Substituting (\ref{FourthOrderTerm}), (\ref{EvolutionDeth}), (\ref{EvolutionTrh}) into (\ref{evolutionLogTrh1}), and expanding the $\partial(e^{-f}\det h)$ term of (\ref{EvolutionTrh}) as well as the torsion terms of (\ref{FourthOrderTerm}) and using $\nabla\chi = 0$, we find
\begin{align}
  (\partial_t - L)\log\Tr & h = \label{EvolutionLogTrh2} \\
 \frac{F'e^{-f}\det h}{\Tr h}&\bigg\{\frac{g^{j\bar k}\partial_j\Tr h\partial_{\bar k}\Tr h}{\Tr h} - \chi^{p\bar q}g^{j\bar k}g^{r\bar s}\nabla_pg_{\bar kr}\nabla_{\bar q}g_{\bar sj} \nonumber \\
                             & - \chi^{p\bar q}g^{j\bar k}\nabla_pg_{\bar sj}\overline{T^s_{qk}} - \chi^{p\bar q}g^{j\bar k}\nabla_{\bar k}g_{\bar qr}T^r_{pj} - \chi^{p\bar q}g^{j\bar k}u_{\bar sj}\nabla_p\overline{T^s_{qk}} - \chi^{p\bar q}g^{j\bar k}u_{\bar qr}\nabla_{\bar k}T^r_{pj} \nonumber \\
                             & + \left(\frac{1}{(\det h)^2} + \frac{F''e^{-f}}{F'\det h}\right)|\partial\det h|^2 - \left(\frac{1}{\det h} + \frac{F''e^{-f}}{F'}\right)2\Re\langle\partial\det h,\partial f\rangle \nonumber \\
  			                     & + \frac{F''e^{-f}\det h}{F'}|\partial f|^2 + e^f\Delta e^{-f} + R' - h^{-1j}{}_kR^k{}_p{}^q{}_jh^p{}_q \bigg\}. \nonumber
\end{align}
Now we apply $(\partial_t - L)$ to the other terms of (\ref{TestFunction}).
\be\label{EvolutionPhi}
-A(\partial_t - L)\varphi = -A\partial_t\varphi + AnF'e^{-f}\det h - AF'e^{-f}\det h\Tr h^{-1}
\ee
\be\label{EvolutionPhi-1}\begin{split}
(\partial_t - L)\frac{1}{\varphi - \inf\varphi + 1} & = -\frac{1}{(\varphi - \inf\varphi + 1)^2}\partial_t\varphi - \frac{2F'e^{-f}\det h}{(\varphi - \inf\varphi + 1)^3}g^{j\bar k}\partial_j\varphi\partial_{\bar k}\varphi \\ & \ \ \ \ + \frac{nF'e^{-f}\det h}{(\varphi - \inf\varphi + 1)^2} - \frac{F'e^{-f}\det h}{(\varphi - \inf\varphi + 1)^2}\Tr h^{-1}
\end{split}\ee
\be\label{EvolutionF}
\frac{B}{2}(\partial_t - L)F^2 = -BF'e^{-f}\det hg^{j\bar k}\partial_jF\partial_{\bar k}F.
\ee
Combining (\ref{EvolutionLogTrh2}), (\ref{EvolutionPhi}), (\ref{EvolutionPhi-1}), (\ref{EvolutionF}) we find
\begin{align}
(\partial_t & -  L) G = \label{EvolutionG} \\
& \frac{F'e^{-f}\det h}{\Tr h}\bigg\{\frac{g^{j\bar k}\partial_j\Tr h\partial_{\bar k}\Tr h}{\Tr h} - \chi^{p\bar q}g^{j\bar k}g^{r\bar s}\nabla_pg_{\bar kr}\nabla_{\bar q}g_{\bar sj} \label{EvolutionG1} \\
& \ \ \ \ \ \ \ - \chi^{p\bar q}g^{j\bar k}\nabla_pg_{\bar sj}\overline{T^s_{qk}} - \chi^{p\bar q}g^{j\bar k}\nabla_{\bar k}g_{\bar qr}T^r_{pj} - \chi^{p\bar q}g^{j\bar k}u_{\bar sj}\nabla_p\overline{T^s_{qk}} - \chi^{p\bar q}g^{j\bar k}u_{\bar qr}\nabla_{\bar k}T^r_{pj} \label{EvolutionG2} \\
& \ \ \ \ \ \ \ + \left(\frac{1}{(\det h)^2} + \frac{F''e^{-f}}{F'\det h}\right)|\partial\det h|^2 - \left(\frac{1}{\det h} + \frac{F''e^{-f}}{F'}\right)2\Re\langle\partial\det h,\partial f\rangle \label{EvolutionG3} \\
& \ \ \ \ \ \ \ + \frac{F''e^{-f}\det h}{F'}|\partial f|^2 + e^f\Delta e^{-f} + R' - h^{-1j}{}_kR^k{}_p{}^q{}_jh^p{}_q \bigg\} \label{EvolutionG4} \\
& - A\partial_t\varphi + AnF'e^{-f}\det h - AF'e^{-f}\det h\Tr h^{-1}  \label{EvolutionG5} \\
& - \frac{1}{(\varphi - \inf\varphi + 1)^2}\partial_t\varphi - \frac{2F'e^{-f}\det h}{(\varphi - \inf\varphi + 1)^3}g^{j\bar k}\partial_j\varphi\partial_{\bar k}\varphi \label{EvolutionG6} \\
& + \frac{nF'e^{-f}\det h}{(\varphi - \inf\varphi + 1)^2} - \frac{F'e^{-f}\det h}{(\varphi - \inf\varphi + 1)^2}\Tr h^{-1} \label{EvolutionG7} \\
& - BF'e^{-f}\det hg^{j\bar k}\partial_jF\partial_{\bar k}F. \label{EvolutionG8}
\end{align}

\noindent We note that though some terms in this expression are complex, nonetheless their sum is real. Hence in what follows we need only to estimate the real part of each term.

We now discuss the role of the many terms in (\ref{EvolutionG}).

The first term of (\ref{EvolutionG1}) is the primary bad term. The second term of (\ref{EvolutionG1}) is the primary good term. The terms of (\ref{EvolutionG2}) are new torsion terms that do not appear in the K\"ahler case.

In the K\"ahler case, the primary good term is able to defeat the primary bad term. In fact, this is still true in the balanced case (see the remark in the introduction). However it is no longer true in the Hermitian case.

Fortunately, though the primary good term is only able to defeat a part of the primary bad term, it is nonetheless able to defeat the new torsion terms at the same time. The special coordinate system (\ref{GLCoordinates}) below facilitates this.

Bounding the remaining part of the primary bad term requires help from the terms of (\ref{EvolutionG6}), (\ref{EvolutionG7}), and (\ref{EvolutionG8}). This is the reason for including $\frac{1}{\varphi - \inf\varphi + 1}$ in (\ref{TestFunction}).

The first term of (\ref{EvolutionG3}) is another bad term. It is defeated by the term of (\ref{EvolutionG8}). This is the purpose of the $F^2$ term in (\ref{TestFunction}).

\section{Aubin-Yau inequality}\label{SectionAubinYauInequality}

In this section we work only with two arbitrary Hermitian metrics $\chi$, $\omega$ on a compact complex manifold, and prove Theorem \ref{Theorem3}. As a result, we will have obtained control over the terms of (\ref{EvolutionG1}) and (\ref{EvolutionG2}). Recall that we define $\alpha_{\bar qpj} = \partial_ju_{\bar qp} - \partial_pu_{\bar qj}$ and $\alpha_j = \chi^{p\bar q}\alpha_{\bar qpj}$, where $u_{\bar kj} = g_{\bar kj} - \chi_{\bar kj}$.

We work at a point $x_0$ using the coordinate system of \cite{GL}, which satisfies
\be\label{GLCoordinates}
\chi_{\bar kj}(x_0) = \delta_{\bar kj}, \ \ \ \ \ \partial_j\chi_{\bar kk}(x_0) = 0, \ \ \ \ \ g_{\bar kj}(x_0) \textrm{ is diagonal}.
\ee
We first compute the bad term. In this coordinate system we have $\partial_j\Tr h = \sum_q\partial_ju_{\bar qq}$.
\begin{align}\label{BadTerm}
g^{j\bar k}\partial_j\Tr h\partial_{\bar k}\Tr h
 & = \sum g^{k\bar k}\partial_ku_{\bar qq}\partial_{\bar k}u_{\bar ss} \\
 & = \sum g^{k\bar k}(\partial_qu_{\bar qk} + \alpha_{\bar qqk})(\partial_{\bar s}u_{\bar ks} + \overline{\alpha_{\bar ssk}}) \nonumber \\
 & = \sum_{k,q,s}g^{k\bar k}\partial_qu_{\bar qk}\partial_{\bar s}u_{\bar ks} + 2\Re\chi^{p\bar q}g^{j\bar k}\partial_pu_{\bar qj}\overline{\alpha_k} + g^{j\bar k}\alpha_j\overline{\alpha_k} \nonumber \\
 & = \chi^{p\bar q}\chi^{r\bar s}g^{j\bar k}\partial_pg_{\bar qj}\partial_{\bar s}g_{\bar kr} - \chi^{p\bar q}\chi^{r\bar s}g^{j\bar k}(\partial_pg_{\bar qj}\partial_{\bar s}\chi_{\bar kr} + \partial_p\chi_{\bar qj}\partial_{\bar s}g_{\bar kr} - \partial_p\chi_{\bar qj}\partial_{\bar s}\chi_{\bar kr}) \nonumber \\
 & \ \ \ \ \ \ \ \ \ \ \ \ \ \ \ \ \ \ \ \ \ \ \ \ \ \ \ \ \ + 2\Re\chi^{p\bar q}g^{j\bar k}\partial_pu_{\bar qj}\overline{\alpha_k} + g^{j\bar k}\alpha_j\overline{\alpha_k} \nonumber \\
 & = \chi^{p\bar q}\chi^{r\bar s}g^{j\bar k}\partial_pg_{\bar qj}\partial_{\bar s}g_{\bar kr} - \chi^{p\bar q}\chi^{r\bar s}g^{j\bar k}(\partial_pu_{\bar qj}\partial_{\bar s}\chi_{\bar kr} + \partial_p\chi_{\bar qj}\partial_{\bar s}u_{\bar kr} + \partial_p\chi_{\bar qj}\partial_{\bar s}\chi_{\bar kr}) \nonumber \\
 & \ \ \ \ \ \ \ \ \ \ \ \ \ \ \ \ \ \ \ \ \ \ \ \ \ \ \ \ \ + 2\Re\chi^{p\bar q}g^{j\bar k}\partial_pu_{\bar qj}\overline{\alpha_k} + g^{j\bar k}\alpha_j\overline{\alpha_k} \nonumber \\
 & = \chi^{p\bar q}\chi^{r\bar s}g^{j\bar k}\partial_pg_{\bar qj}\partial_{\bar s}g_{\bar kr} - 2\Re\chi^{p\bar q}g^{j\bar k}\partial_pu_{\bar qj}\overline{T_k}  - g^{j\bar k}T_j\overline{T_k} \nonumber \\
 & \ \ \ \ \ \ \ \ \ \ \ \ \ \ \ \ \ \ \ \ \ \ \ \ \ \ \ \ \ + 2\Re\chi^{p\bar q}g^{j\bar k}\partial_pu_{\bar qj}\overline{\alpha_k} + g^{j\bar k}\alpha_j\overline{\alpha_k} \nonumber \\
 & = \chi^{p\bar q}\chi^{r\bar s}g^{j\bar k}\partial_pg_{\bar qj}\partial_{\bar s}g_{\bar kr} - 2\Re\chi^{p\bar q}g^{j\bar k}(\partial_ju_{\bar qp} - \alpha_{\bar qpj})\overline{T_k}  - g^{j\bar k}T_j\overline{T_k} \nonumber \\
 & \ \ \ \ \ \ \ \ \ \ \ \ \ \ \ \ \ \ \ \ \ \ \ \ \ \ \ \ \ + 2\Re\chi^{p\bar q}g^{j\bar k}(\partial_ju_{\bar qp} - \alpha_{\bar qpj})\overline{\alpha_k} + g^{j\bar k}\alpha_j\overline{\alpha_k} \nonumber \\
 & = \chi^{p\bar q}\chi^{r\bar s}g^{j\bar k}\partial_pg_{\bar qj}\partial_{\bar s}g_{\bar kr} - 2\Re g^{j\bar k}\partial_j\Tr h\overline{T_k} + 2\Re g^{j\bar k}\alpha_j\overline{T_k}  - g^{j\bar k}T_j\overline{T_k} \nonumber \\
 & \ \ \ \ \ \ \ \ \ \ \ \ \ \ \ \ \ \ \ \ \ \ \ \ \ \ \ \ \ + 2\Re g^{j\bar k}\partial_j\Tr h\overline{\alpha_k} - g^{j\bar k}\alpha_j\overline{\alpha_k}. \nonumber
\end{align}
The good term is
\be\label{GoodTerm}
-\chi^{p\bar q}g^{j\bar k}g^{r\bar s}\nabla_pg_{\bar sj}\nabla_{\bar q}g_{\bar kr} = -\chi^{p\bar q}g^{j\bar k}g^{r\bar s}\partial_pg_{\bar sj}\partial_{\bar q}g_{\bar kr} + 2\Re\chi^{p\bar q}g^{j\bar k}\partial_pg_{\bar sj}\overline{\Gamma^s_{qk}} - \chi^{p\bar q}g^{j\bar k}g_{\bar sr}\Gamma^r_{pj}\overline{\Gamma^s_{qk}}.
\ee
Now we have
\begin{align}\label{TorsionTerms}
    -\chi^{p\bar q}g^{j\bar k}\nabla_pu_{\bar sj}\overline{T^s_{qk}} - \chi^{p\bar q}g^{j\bar k}\nabla_{\bar k}u_{\bar qr}T^r_{pj}
= & -2\Re\chi^{p\bar q}g^{j\bar k}\partial_pg_{\bar sj}\overline{T^s_{qk}} \\
  & + \chi^{p\bar q}g^{j\bar k}g_{\bar sr}\Gamma^r_{pj}\overline{\Gamma^s_{qk}} - \chi^{p\bar q}g^{j\bar k}g_{\bar sr}\Gamma^r_{jp}\overline{\Gamma^s_{kq}} \nonumber \\
  & + \chi^{p\bar q}g^{j\bar k}\chi_{\bar sr}T^r_{pj}\overline{T^s_{qk}} - \chi^{p\bar q}g^{j\bar k}T^r_{pj}\overline{\alpha_{\bar rqk}}. \nonumber
\end{align}
Note that the second term on the right-hand side of (\ref{GoodTerm}) may combine with the first term on the right-hand side of (\ref{TorsionTerms}), using $\Gamma^s_{qk} - T^s_{qk} = \Gamma^s_{kq}$. Also note that the third term on the right-hand side of (\ref{GoodTerm}) cancels with the second term on the right-hand side of (\ref{TorsionTerms}). Thus we see that
\begin{align}
& \frac{g^{j\bar k}\partial_j\Tr h\partial_{\bar k}\Tr h}{\Tr h} - \chi^{p\bar q}g^{j\bar k}g^{r\bar s}\nabla_pg_{\bar sj}\nabla_{\bar q}g_{\bar kr} - \Re\chi^{p\bar q}g^{j\bar k}\nabla_pu_{\bar sj}\overline{T^s_{qk}} - \Re\chi^{p\bar q}g^{j\bar k}\nabla_{\bar k}u_{\bar qr}T^r_{pj} \nonumber \\
= \ & \frac{1}{\Tr h}\chi^{p\bar q}\chi^{r\bar s}g^{j\bar k}\partial_pg_{\bar qj}\partial_{\bar s}g_{\bar kr} - 2\Re g^{j\bar k}\frac{\partial_j\Tr h}{\Tr h}\overline{T_k} - \frac{1}{\Tr h}g^{j\bar k}T_j\overline{T_k} \nonumber \\
    & + 2\Re g^{j\bar k}\frac{\partial_j\Tr h}{\Tr h}\overline{\alpha_k} + \frac{1}{\Tr h}2\Re g^{j\bar k}\alpha_j\overline{T_k} - \frac{1}{\Tr h}g^{j\bar k}\alpha_j\overline{\alpha_k} \nonumber \\
    & - \chi^{p\bar q}g^{j\bar k}g^{r\bar s}\partial_pg_{\bar sj}\partial_{\bar q}g_{\bar kr} + 2\Re\chi^{p\bar q}g^{j\bar k}\partial_pg_{\bar sj}\overline{\Gamma^s_{kq}} \nonumber \\
		& - \chi^{p\bar q}g^{j\bar k}g_{\bar sr}\Gamma^r_{jp}\overline{\Gamma^s_{kq}} + \chi^{p\bar q}g^{j\bar k}\chi_{\bar sr}T^r_{pj}\overline{T^s_{qk}} - \Re\chi^{p\bar q}g^{j\bar k}T^r_{pj}\overline{\alpha_{\bar rqk}}. \label{AubinYauIdentity}
\end{align}
The first term on the right-hand side of (\ref{AubinYauIdentity}) may be estimated as follows, by a classic technique using the Cauchy-Schwarz inequality twice:
\begin{align}
\chi^{p\bar q}\chi^{r\bar s}g^{j\bar k}\partial_pg_{\bar qj}\partial_{\bar s}g_{\bar kr}
 & = \sum_{s,q}\sum_kg^{k\bar k}\partial_sg_{\bar sk}\partial_{\bar q}g_{\bar kq} \label{AubinYauEstimate1} \\
 & \leq \sum_{s,q}\sqrt{\sum_kg^{k\bar k}|\partial_sg_{\bar sk}|^2}\sqrt{\sum_kg^{k\bar k}|\partial_qg_{\bar qk}|^2} \nonumber \\
 & = \left(\sum_s\sqrt{\sum_kg^{k\bar k}|\partial_sg_{\bar sk}|^2}\right)^2 \nonumber \\
 & = \left(\sum_s\sqrt{g_{\bar ss}}\sqrt{\sum_kg^{s\bar s}g^{k\bar k}|\partial_sg_{\bar sk}|^2}\right)^2 \nonumber \\
 & \leq (\Tr h)\cdot\sum_{k,s}g^{k\bar k}g^{s\bar s}|\partial_sg_{\bar sk}|^2. \nonumber
\end{align}
Also the eighth term on the right-hand side of (\ref{AubinYauIdentity}) may be broken apart, using the special coordinate system, as
\be\label{AubinYauEstimate2}
2\Re\chi^{p\bar q}g^{j\bar k}\partial_pg_{\bar sj}\overline{\Gamma^s_{kq}} \leq \sum_{q\neq s}g^{k\bar k}g^{s\bar s}|\partial_qg_{\bar sk}|^2 + \chi^{p\bar q}g^{j\bar k}g_{\bar sr}\Gamma^r_{jp}\overline{\Gamma^s_{kq}}.
\ee
Hence from (\ref{AubinYauIdentity}), (\ref{AubinYauEstimate1}), (\ref{AubinYauEstimate2}) we obtain
\be\begin{split}
& \frac{g^{j\bar k}\partial_j\Tr h\partial_{\bar k}\Tr h}{\Tr h} - \chi^{p\bar q}g^{j\bar k}g^{r\bar s}\nabla_pg_{\bar sj}\nabla_{\bar q}g_{\bar kr} - \Re\chi^{p\bar q}g^{j\bar k}\nabla_pu_{\bar sj}\overline{T^s_{qk}} - \Re\chi^{p\bar q}g^{j\bar k}\nabla_{\bar k}u_{\bar qr}T^r_{pj} \\
\leq \ & - 2\Re g^{j\bar k}\frac{\partial_j\Tr h}{\Tr h}\overline{T_k} - \frac{1}{\Tr h}g^{j\bar k}T_j\overline{T_k} + 2\Re g^{j\bar k}\frac{\partial_j\Tr h}{\Tr h}\overline{\alpha_k} + \frac{1}{\Tr h}2\Re g^{j\bar k}\alpha_j\overline{T_k} - \frac{1}{\Tr h}g^{j\bar k}\alpha_j\overline{\alpha_k} \\
		& + \chi^{p\bar q}g^{j\bar k}\chi_{\bar sr}T^r_{pj}\overline{T^s_{qk}} - \Re\chi^{p\bar q}g^{j\bar k}T^r_{pj}\overline{\alpha_{\bar rqk}}.
\end{split}\ee
This proves Theorem \ref{Theorem3}.

\ActivateWarningFilters[pdflatex]
\section{$C^2$ estimate}\label{SectionSecondOrderEstimate}
\DeactivateWarningFilters[pdflatex]

Now we prove Theorem \ref{Theorem2}. We work at a maximum point $(x_0,t_0)$ of $G$. Since $\partial_jG = 0$ at this point, we have
\be\label{MaxPointEquality}
\frac{\partial_j\Tr h}{\Tr h} = A\partial_j\varphi + \frac{1}{(\varphi - \inf\varphi + 1)^2}\partial_j\varphi - BF\partial_jF.
\ee
Recalling the expression (\ref{EvolutionG}) for $(\partial_t - L)G$ and applying Corollary \ref{Corollary1} shows that we now need only to estimate the term involving $g^{j\bar k}\partial_j\Tr h\overline{T_k}$ from (\ref{CorollaryAubinYau}) and the terms controlled by $|\partial\det h|^2$ from (\ref{EvolutionG}).

Using (\ref{MaxPointEquality}) and incorporating the factor of $F'e^{-f}\det h/\Tr h$ from outside the brackets in (\ref{EvolutionG}), we have that the first term coming from the right-hand side of (\ref{CorollaryAubinYau}) may be estimated in the following way, as long as $A > 1$:
\be\label{BadTerm4}\begin{split}
& -\frac{F'e^{-f}\det h}{\Tr h} 2\Re g^{j\bar k}\frac{\partial_j\Tr h}{\Tr h}\overline{T_k} \\
& \ \ \ \ \ \ \ \ = -\frac{F'e^{-f}\det h}{\Tr h}2\Re g^{j\bar k}\left(A\partial_j\varphi + \frac{1}{(\varphi - \inf\varphi + 1)^2}\partial_j\varphi - BF\partial_jF\right)\overline{T_k} \\
& \ \ \ \ \ \ \ \ \leq \frac{F'e^{-f}\det h}{(\varphi - \inf\varphi + 1)^3}g^{j\bar k}\partial_j\varphi\partial_{\bar k}\varphi + \frac{(\varphi - \inf\varphi + 1)^3}{(\Tr h)^2}CA^2\Tr h^{-1} \\
& \ \ \ \ \ \ \ \ \ \ \ \ + \frac{F'e^{-f}\det h}{(\varphi - \inf\varphi + 1)^3}g^{j\bar k}\partial_j\varphi\partial_{\bar k}\varphi + \frac{(\varphi - \inf\varphi + 1)^{-1}}{(\Tr h)^2}C\Tr h^{-1} \\
& \ \ \ \ \ \ \ \ \ \ \ \ \ \ \ \ + F'e^{-f}\det hg^{j\bar k}\partial_jF\partial_{\bar k}F + CB^2\Tr h^{-1} \\
& \ \ \ \ \ \ \ \ \leq \frac{2F'e^{-f}\det h}{(\varphi - \inf\varphi + 1)^3}g^{j\bar k}\partial_j\varphi\partial_{\bar k}\varphi + \frac{(\varphi - \inf\varphi + 1)^3}{(\Tr h)^2}CA^2\Tr h^{-1} \\
& \ \ \ \ \ \ \ \ \ \ \ \ \ \ \ \ + F'e^{-f}\det hg^{j\bar k}\partial_jF\partial_{\bar k}F + CB^2\Tr h^{-1}.
\end{split}\ee
Adding the terms from (\ref{EvolutionG5}), (\ref{EvolutionG6}), (\ref{EvolutionG7}), (\ref{EvolutionG8}) to (\ref{BadTerm4}) and using a positive lower bound for $F'e^{-f}\det h$ gives
\be\label{SurvivingTerms}\begin{split}
-\frac{F'e^{-f}\det h}{\Tr h}2\Re g^{j\bar k}&\frac{\partial_j\Tr h}{\Tr h}\overline{T^q_{qk}} + (\partial_t - L)\left\{-A\varphi + \frac{1}{\varphi - \inf\varphi + 1} + \frac{B}{2}F^2\right\} \\
& \leq CA + (CB^2 - \epsilon A)\Tr h^{-1} + \frac{(\varphi - \inf\varphi + 1)^3}{(\Tr h)^2}CA^2\Tr h^{-1} \\
& \ \ \ \ \ \ \ \ \ \ + (1-B)F'e^{-f}\det hg^{j\bar k}\partial_jF\partial_{\bar k}F.
\end{split}\ee
Now as long as $B > 1$ then
\begin{align}
(1-B)F'e^{-f}\det hg^{j\bar k}\partial_jF\partial_{\bar k}F
& = \phantom{+} (1-B)(F')^3 e^{-3f}  \det h         g^{j\bar k}\partial_j \det h \partial_{\bar k} \det h \nonumber \\
& \phantom{=} + (1-B)(F')^3 e^{-3f} (\det h)^3      g^{j\bar k}\partial_j f      \partial_{\bar k} f      \nonumber \\
& \phantom{=} - (1-B)(F')^3 e^{-3f} (\det h)^2 2\Re g^{j\bar k}\partial_j \det h \partial_{\bar k} f      \nonumber \\
& \leq (2 - \epsilon B) g^{j\bar k}\partial_j\det h\partial_{\bar k}\det h + CB^2\Tr h^{-1}. \nonumber
\end{align}
Since
\be
\frac{1}{\Tr h}|\partial\det h|^2 \leq g^{j\bar k}\partial_j\det h\partial_{\bar k}\det h
\ee
we see that for $B \gg 1$ the term $(1-B)F'e^{-f}\det hg^{j\bar k}\partial_jF\partial_{\bar k}F$ overtakes the final unwanted terms from (\ref{EvolutionG3}).

Putting all of the above estimates together, it follows that at $(x_0,t_0)$
\be\begin{split}
0 & \leq (\partial_t - L)G \\
  & \leq CA + (CB^2 - \epsilon A)\Tr h^{-1} + \frac{(\varphi - \inf\varphi + 1)^3}{(\Tr h)^2}CA^2\Tr h^{-1}.
\end{split}\ee
Take $A \gg B$ so that
\be\label{Trh-1EstimateAtMaxPoint}
0 \leq CA - 2\Tr h^{-1} + \frac{(\varphi - \inf\varphi + 1)^3}{(\Tr h)^2}CA^2\Tr h^{-1}.
\ee
Now we consider two cases. If
\be\label{Case1}
(\Tr h)^2 \leq CA^2(\varphi - \inf\varphi + 1)^3
\ee
then since $\frac{3}{2}\log s \leq s$ we have
\begin{align}
G & \leq G(x_0,t_0) \\
  & \leq \log\Tr h(x_0,t_0) - A\varphi(x_0,t_0) + 1 + \frac{B}{2}F(x_0,t_0)^2 \nonumber \\
	& \leq \frac{1}{2}\log(CA^2(\varphi(x_0,t_0) - \inf\varphi + 1)^3) - A\varphi(x_0,t_0) + 1 + \frac{B}{2}F(x_0,t_0)^2 \nonumber \\
	& \leq \varphi(x_0,t_0) - \inf\varphi + 1 + \frac{1}{2}\log (CA^2) - A\varphi(x_0,t_0) + 1 + \frac{B}{2}F(x_0,t_0)^2 \nonumber \\
	& = -(A-1)\varphi(x_0,t_0) - \inf\varphi + 2 + \frac{1}{2}\log (CA^2) + \frac{B}{2}F(x_0,t_0)^2 \nonumber \\
	& \leq -A\inf\varphi + 2 + \frac{1}{2}\log (CA^2) + \frac{B}{2}F(x_0,t_0)^2. \nonumber
\end{align}
Since $F$ is uniformly bounded this implies (\ref{C2Estimate}) directly.

The alternative to (\ref{Case1}) is
\be\label{Case2}
\frac{(\varphi - \inf\varphi + 1)^3}{(\Tr h)^2}CA^2 \leq 1.
\ee
In this case (\ref{Trh-1EstimateAtMaxPoint}) implies $\Tr h^{-1}(x_0,t_0)\leq C$. The determinant estimate now gives $\Tr h(x_0,t_0)\leq C$. Hence
\begin{align}
\log\Tr h - A\varphi & \leq G \\
                     & \leq G(x_0,t_0) \nonumber \\
										 & \leq \log\Tr h(x_0,t_0) - A\inf\varphi + 1 + \frac{B}{2}F(x_0,t_0)^2 \nonumber \\
										 & \leq \log\Tr h(x_0,t_0) - A\inf\varphi + C \nonumber
\end{align}
and finally
\be
\Tr h \leq Ce^{A(\varphi - \inf\varphi)}.
\ee
This completes the proof of Theorem \ref{Theorem2} and hence the proof of Theorem \ref{Theorem1}.

        Department of Mathematics, Columbia University, New York, NY 10027, USA \\
\indent kjs@math.columbia.edu

\end{document}